\newtheorem{theorem}{Theorem}[section]
\newtheorem{proposition}{Proposition}[section]
\newtheorem{definition}[theorem]{Definition}
\newtheorem{example}[theorem]{Example}
\newtheorem{remark}[theorem]{Remark}
\numberwithin{equation}{section}
\title{ Born-Infeld solitons, Maximal surfaces and Ramanujan's Identities}
\author{Rukmini Dey}
\address{International Centre for Theoretical Sciences, Bengaluru- 560 089, India}
\email{rukmini@icts.res.in}
\author{Rahul Kumar Singh}
\address{Harish-Chandra Research Institute, HBNI, Allahabad-211 019, India}
\email{rhlsngh498@gmail.com}
\subjclass[2010]{53A35, 53B30, 53B50}
\keywords{Born-Infeld equation, conjugate maximal surfaces, Ramanujan Identity, soliton, Weierstrass-Enneper representation }
\begin{document}

\maketitle

\begin{abstract}
We show that a Born-Infeld soliton can be realised either as a spacelike minimal graph or timelike minimal graph over a timelike plane or a combination of both away from singular points. We also obtain some exact solutions of the Born-Infeld equation from already known solutions to the maximal surface equation. Further we present a method to construct a one-parameter family of complex solitons from a given one parameter family of maximal surfaces. Finally, using Ramanujan's Identities and the Weierstrass-Enneper representation of maximal surfaces, we derive further non-trivial identities. 
\end{abstract}

\section{Introduction}
This paper explores the beautiful relationship between Born-Infeld solitons and maximal surfaces in ${\mathbb L}^3$ (Lorentz-Minkowski space) and discusses some nontrivial identities which arises as a consequence of  certain  Ramanujan identities and the Weierstrass-Enneper representation for maximal surfaces.

Any smooth function  $ \varphi(x,t) $  which is a solution to Born-Infeld equation(see \cite{whitham})
\begin{equation}\label{borninfeldequn}
(1+\varphi_{x}^{2})\varphi_{tt}-2\varphi_{x}\varphi_{t}\varphi_{xt}+(\varphi_{t}^{2}-1)\varphi_{xx}=0.
\end{equation}
is known as a Born-Infeld soliton.

A graph  $(x,t,f(x,t))$ in Lorentz-Minkowski space $ \mathbb{L}^3 :=(\mathbb{R}^3, dx^2+dt^2-dz^2)$ is maximal if it satisfies
\begin{equation}\label{maximalsurfaceequn}
(1-f_{x}^2)f_{tt}+2f_{x}f_{t}f_{xt}+
(1-f_{t}^2)f_{xx}=0,
\end{equation}
for some  smooth function $ f(x,t) $  satisfying $ f_x^2+f_t^2<1,$ see \cite{kobasingu}.
This equation is known as maximal surface equation.

It has been known that the Born-Infeld equation is related to the minimal surface equation in $ \mathbb{R}^3 $ via a wick rotation in the variable $ t $ i.e., if we replace $ t $ by $ it $  in \eqref{borninfeldequn}, we get back the minimal surface equation and vice-versa \cite{rukmini}.
This fact has been used by the authors in \cite{kristina} to obtain some exact solutions of the Born-Infeld equation.

In this paper  we observe that the Born-Infeld equation is also related to the maximal surface equation by a wick rotation in the variable $ x $ i.e., if we replace $x$ by $ ix $ and define $ f(x,t):=\varphi(ix,t),$ in \eqref{borninfeldequn}, we get back the maximal surface equation \eqref{maximalsurfaceequn} and vice-versa \cite{rahul}.

Using this interrelation the authors of this paper had earlier rederived the Weierstrass-Enneper representation for minimal surfaces and maximal surfaces, assuming that the Gauss map for such surfaces is one-to-one(see \cite{rukmini}, \cite{rahul}).\\
Also, the first author of this paper and collaborator had obtained a one parameter family for Born-Infeld solitons from a given one parameter family of minimal surfaces \cite{rukminione}. Recently, in \cite{rukminirama}, the first author of this paper had obtained some nontrivial identities using Ramanujan Identities and Weierstrass-Enneper representation for minimal surfaces.

In this paper, we further explore the interrelation between Born-Infeld equation and maximal surface equation and obtain some analogous results.
\begin{remark}
Maximal surface equation can be obtained from the minimal surface equation by wick rotation in both the variables and vice-versa, but in general we get complex surfaces this way. The identities use Weierstrass-Enneper representation of real maximal surfaces and hence they cannot be obtained from Weierstrass-Enneper representation of real minimal surfaces.
 \end{remark}
 
\section{Born-Infeld Solitons}

Consider the Lorentz-Minkowski space $ \mathbb{L}^3,$ assuming that the cartesian coordinates are $ (x,y,z) $, then the Lorentzian metric is denoted by $ dx^2+dy^2-dz^2 $ or $ \langle,\rangle_{L} $. Then a graph in $ \mathbb{L}^3 $ over a domain of the timelike plane $ \{x=0\} $ has the form 
\begin{equation}\label{graphovertime}
 X(y,z)= (\varphi(y,z),y,z), 
\end{equation}  where $ \varphi: \Omega \subset \mathbb{R}^2\rightarrow \mathbb{R} $ is a smooth function \cite{magid}. 
A graph in $ \mathbb{L}^3 $ is said to be minimal if its mean curvature vanishes everywhere (i.e. $ H\equiv 0$). 
For the definitions of the normal vector $ N $ and the mean curvature $ H $ for a non-degenerate surface in Lorentz-Minkowski space, (see page no.$ 34 $ of \cite{lopez}).
\begin{proposition}
The solutions of \eqref{borninfeldequn}, i.e., Born-Infeld solitons can be represented as a spacelike minimal graph or timelike minimal graph over a domain in timelike plane or a combination of both away from singular points (points where tangent plane degenerates),  i.e., points where the determinant of the coefficients of first fundamental form vanishes. 
\end{proposition}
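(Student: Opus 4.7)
The plan is to take the generic graph over the timelike plane $\{x=0\}$ as in \eqref{graphovertime}, namely $X(y,z)=(\varphi(y,z),y,z)$, and carry out the Lorentzian analogue of the standard mean-curvature computation, ultimately matching $H=0$ with equation \eqref{borninfeldequn} (after the obvious renaming $(x,t)\to(y,z)$). A direct computation of the induced metric in $\mathbb{L}^3$ gives
\begin{equation*}
E=1+\varphi_y^2,\quad F=\varphi_y\varphi_z,\quad G=\varphi_z^2-1,
\end{equation*}
so that
\begin{equation*}
EG-F^2=\varphi_z^2-\varphi_y^2-1.
\end{equation*}
This last quantity is precisely the determinant of the first fundamental form: where it is positive the graph is spacelike, where it is negative it is timelike, and its vanishing locus is exactly the singular set excluded by the statement.

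Next I would form the Lorentzian normal $N_0=(1,-\varphi_y,\varphi_z)$ obtained from the cross product of the tangent vectors with respect to the metric $dx^2+dy^2-dz^2$; its self-pairing equals $-(EG-F^2)$, so the unit normal exists and is smooth on each regular region of either causal type (timelike over the spacelike region, spacelike over the timelike region). The normalising factor $|EG-F^2|^{1/2}$ appears both in the second fundamental form and in the denominator of $H$, and hence cancels. Using $X_{yy}=(\varphi_{yy},0,0)$, $X_{yz}=(\varphi_{yz},0,0)$, $X_{zz}=(\varphi_{zz},0,0)$, the numerator of the mean curvature reduces to
\begin{equation*}
(1+\varphi_y^2)\varphi_{zz}-2\varphi_y\varphi_z\varphi_{yz}+(\varphi_z^2-1)\varphi_{yy},
\end{equation*}
which is exactly the Born-Infeld operator in the $(y,z)$ variables.

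Hence $H\equiv 0$ away from the singular set is equivalent to $\varphi$ being a Born-Infeld soliton, and conversely any soliton produces a graph which is minimal wherever it is regular and whose causal character is read off from the sign of $\varphi_z^2-\varphi_y^2-1$, so that spacelike, timelike, and mixed regions are all genuine possibilities. The main obstacle is the sign bookkeeping in the Lorentzian formulas: one has to verify carefully that, despite the change in causal character of $N$ across the two regimes, the minimal surface equation reduces to the \emph{same} PDE \eqref{borninfeldequn} on both pieces rather than to two different equations. Once that cancellation is checked, the proposition follows at once.
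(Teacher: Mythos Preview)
Your proposal is correct and follows essentially the same route as the paper: compute the induced metric of the graph $X(y,z)=(\varphi(y,z),y,z)$, identify the causal type of the surface via the sign of $EG-F^2=\varphi_z^2-\varphi_y^2-1$, compute the (unnormalized) normal $(1,-\varphi_y,\varphi_z)$ and the second fundamental form, and verify that in both the spacelike and timelike regimes the equation $H=0$ reduces to the same Born-Infeld PDE. The only cosmetic difference is that the paper writes out the two mean-curvature formulas (with the $\epsilon=\pm1$ convention from \cite{lopez}) explicitly and observes they have the same numerator, whereas you phrase this as a cancellation of the normalising factor $|EG-F^2|^{1/2}$; the content is identical.
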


\begin{proof}

Coefficients of first fundamental form for \eqref{graphovertime} are
\begin{center}
$ E=\varphi_{y}^{2}+1 $
~~,~~$ G=\varphi_{z}^{2}-1 $
~~,~~$ F=\varphi_{y}\varphi_{z} $
\end{center}
and determinant of the coefficients of the first fundamental form is $EG-F^2=-\varphi_y^2+\varphi_z^2-1.$ In general we can have $-\varphi_y^2+\varphi_z^2-1=0$ (tangent plane degenerates). But when $-\varphi_y^2+\varphi_z^2-1\neq 0,$ one can define the normal vector $ N $ and it is given by
\begin{center}
$ N=\left(\frac{1}{\sqrt{\vert 1+\varphi_{y}^{2}-\varphi_{z}^{2} \vert}},\frac{-\varphi_{y}}{\sqrt{\vert 1+\varphi_{y}^{2}-\varphi_{z}^{2} \vert}},\frac{\varphi_{z}}{\sqrt{\vert 1+\varphi_{y}^{2}-\varphi_{z}^{2} \vert}}\right).$
\end{center}
 
Therefore
\begin{center}
$ \langle N,N\rangle_{L}=\frac{1+\varphi_{y}^{2}-\varphi_{z}^{2}}{\vert 1+\varphi_{y}^{2}-\varphi_{z}^{2} \vert}. $
\end{center}

If $ 1+\varphi_{y}^{2}-\varphi_{z}^{2}>0  $, we have $ \langle N,N\rangle_{L}=1,$ then the graph is timelike. On the other hand if $ \langle N,N\rangle_{L}=-1 $, i.e. $ 1+\varphi_{y}^{2}-\varphi_{z}^{2}<0,  $ then the graph is spacelike.\\
Now we can easily compute coefficients of second fundamental form, they are given by
\begin{center}
$ e=\frac{\varphi_{yy}}{\sqrt{\vert 1+\varphi_{y}^{2}-\varphi_{z}^{2} \vert}} $
~~,~~
$ g=\frac{\varphi_{zz}}{\sqrt{\vert 1+\varphi_{y}^{2}-\varphi_{z}^{2} \vert}} $
~~,~~
$f=\frac{\varphi_{yz}}{\sqrt{\vert 1+\varphi_{y}^{2}-\varphi_{z}^{2} \vert}}, $
\end{center}
~\\~
here we see $ EG-F^2=-1-\varphi_{y}^{2}+\varphi_{z}^{2} $, and if $ EG-F^2>0, $ i.e. $ 1+\varphi_{y}^{2}-\varphi_{z}^{2}< 0 $ the graph is spacelike and if $ EG-F^2<0 $, i.e., $ 1+\varphi_{y}^{2}-\varphi_{z}^{2}> 0 $, then the graph is timelike. In any case we know that the mean curvature for a surface in $ \mathbb{L}^3 $ is given by( see page no. $ 40 $ of \cite{lopez}).
\begin{center}
$ H=\dfrac{\epsilon}{2}\left(\dfrac{eG-2fF+gE}{EG-F^2}\right), $
\end{center}
where $ \epsilon=1 $  if the surface is timelike, $ \epsilon=-1 $ if the surface is spacelike.
So for the spacelike graph over a timelike plane, we have
\begin{equation*}
 H=-\dfrac{1}{2}\dfrac{(1+\varphi_{y}^{2})\varphi_{zz}-2\varphi_{y}\varphi_{z}\varphi_{yz}+(\varphi_{z}^{2}-1)\varphi_{yy}}{(-1-\varphi_{y}^{2}+\varphi_{z}^{2})^{\frac{3}{2}}}, 
\end{equation*}
and  for the timelike graph over timelike plane, we have
\begin{equation*}
 H=-\dfrac{1}{2}\dfrac{(1+\varphi_{y}^{2})\varphi_{zz}-2\varphi_{y}\varphi_{z}\varphi_{yz}+(\varphi_{z}^{2}-1)\varphi_{yy}}{(1+\varphi_{y}^{2}-\varphi_{z}^{2})^{\frac{3}{2}}}. 
\end{equation*} 
So if the mean curvature $ H $ for the spacelike graph or timelike graph over a timelike plane is zero, we get 
\begin{equation*}
(1+\varphi_{y}^{2})\varphi_{zz}-2\varphi_{y}\varphi_{z}\varphi_{yz}+(\varphi_{z}^{2}-1)\varphi_{yy}=0.
\end{equation*}
By renaming the variables $ y,z $ as $ x,t $, we get

$$(1+\varphi_{x}^{2})\varphi_{tt}-2\varphi_{x}\varphi_{t}\varphi_{xt}+(\varphi_{t}^{2}-1)\varphi_{xx}=0.$$
This is nothing but the Born-Infeld equation.
\end{proof}

Now we will give an example of a Born-Infeld soliton which has some points where the determinant of the coefficients of  first fundamental form vanishes, i.e. it has the points where the tangent plane is lightlike (tangent plane degenerates).
\begin{example}
Consider the graph $ X(y,z)=(x=\sinh^{-1}(\sqrt{z^2-y^2}),y,z) $. Then we can easily check that it satisfies the Born-Infeld equation. Also, its tangent planes  degenerates precisely at the points $ (x,y,z)\in \mathbb{L}^3 $ where $ x=0 $ and $ y=\pm z. $ \\
This Born-Infeld soliton can be obtained from the elliptic catenoid (a maximal surface, see \cite{mira}), by wick rotation (a concept which we describe in the next section) and renaming the variables.
\end{example}

\section{Wick rotation of maximal surface equation}
In this section we are going to obtain some solutions to the Born-Infeld equation \eqref{borninfeldequn} from some of the already known solutions to the maximal surface equation \eqref{maximalsurfaceequn}.
 Suppose if $ f(x,t), $ is a solution to the maximal surface equation \eqref{maximalsurfaceequn}, then we obtain a solution to Born-Infeld equation \eqref{borninfeldequn}, by defining $ \varphi(x,t):=f(ix,t). $
  Some of the solutions will be real-valued and some of them will be complex.
 \vskip 2mm
 
 \textit{Wick Helicoid of the first kind:} Consider helicoid of the first kind (see\cite{mira})
$$ f(x,t)=\frac{1}{k}\tan^{-1}(\frac{t}{x}), k\neq 0~~~\text{and}~~~ k\in \mathbb{R}. $$ Then
\begin{center}
$ \varphi(x,t):=f(ix,t)=-\frac{i}{k}\tanh^{-1}(\frac{t}{x}) $
\end{center}
a complex-valued solution to the Born-Infeld equation.

\vskip 2mm

\textit{Wick Helicoid of the second kind:} Next consider helicoid of the second kind (see\cite{mira}) $$ f(x,t)=x\tanh kt, k\neq 0 ~~~\text{and}~~~ k\in \mathbb{R} $$
\begin{center}
$  \varphi(x,t):=f(ix,t)=ix\tanh kt$.
\end{center}
which is again a complex valued solution to the Born-Infeld equation.

\vskip 2mm

\textit{Wick Scherk's surface of the first kind:} Consider (see \cite{kobamax}) $$f(x,t)=\ln\left(\dfrac{\cosh t}{\cosh x}\right) $$
\begin{center}
$  \varphi(x,t):=f(ix,t)=\ln\left(\dfrac{\cosh t}{\cos x}\right). $
\end{center}
Since $ \cosh t $ is always positive, this solution is conditionally real-valued, depending on the sign of $ \cos x $.

\section{One parameter family of complex solitons}

\begin{definition}
 $$~~\text{Let}~~ X_1(\tau,\bar{\tau})=(x_1(\tau,\bar{\tau}),t_1(\tau,\bar{\tau}),f_1(\tau,\bar{\tau}))~~~\text{and}~~~ X_2(\tau,\bar{\tau})=(x_2(\tau,\bar{\tau}),t_2(\tau,\bar{\tau}),f_2(\tau,\bar{\tau})) $$ be  isothermal paramerizations of two maximal surfaces, where $ X_j(\tau,\bar{\tau}):\Omega\subseteq \mathbb{C}\rightarrow \mathbb{L}^3, \tau=\tilde{u}+i\tilde{v} \in \Omega ~~\text{;}~~ j=1,2 $  such that $$ X:=X_1+iX_2:\Omega\subseteq \mathbb{C}\rightarrow \mathbb{C}^3$$ is a holomorphic mapping. Then we say that $ X_1 $ and $ X_2 $ are conjugate maximal surfaces.
\end{definition} 
It should be remarked that if the Gauss map of a given maximal surface in $\mathbb{L}^3$ is one-one, then its conjugate maximal surface exist.
 If $ X_1(\tau,\bar{\tau})=(x_1(\tau,\bar{\tau}),t_1(\tau,\bar{\tau}),f_1(\tau,\bar{\tau})) $ is a maximal surface and $ X_2(\tau,\bar{\tau})=(x_2(\tau,\bar{\tau}),t_2(\tau,\bar{\tau}),f_2(\tau,\bar{\tau})) $ its conjugate maximal surface, where $ \tau=\tilde{u}+i\tilde{v} $ is an isothermal coordinate system. Then it can be easliy shown that  
$$ X_{\theta}(\tau,\bar{\tau}):= X_1(\tau,\bar{\tau})\cos{\theta}+ X_2(\tau,\bar{\tau})\sin{\theta} $$
also defines a maximal surface for each $ \theta $. 
\begin{remark}
$$ X_{\theta}(\tau,\bar{\tau}):= X_1(\tau,\bar{\tau})\cos{\theta}+ X_2(\tau,\bar{\tau})\sin{\theta}=Re\{e^{-i\theta}X(\tau,\bar{\tau})\}  $$  corresponds to the fact that the Weiestrass-Enneper data for the maximal surface $ X_{\theta} $ is given by $e^{-i \theta}M$, where $M$ is the Weierstrass-Enneper data for $ X_1. $
\end{remark}

As we have seen earlier, if $ (x,t,f(x,t)) $ is a solution to maximal surface equation \eqref{maximalsurfaceequn}, then $ (ix,t,\varphi(x,t):=f(ix,t)) $ is a solution for Born-Infeld equation \eqref{borninfeldequn}.\\
Next, if $ X_1=(x_1,t_1,f_1) $ and $ X_2=(x_2,t_2,f_2) $ are conjugate maximal surfaces, then we define $ X_1^s=(ix_1,t_1,\varphi_1)~~\text{,}~~ X_2^s=(ix_2,t_2,\varphi_2) $ as conjugate Born-Infeld Solitons. \\

Now we digress a little. According to a known result \cite{rahul}  if  $$ X_j(\tau,\bar{\tau})=(x_j(\tau,\bar{\tau}),t_j(\tau,\bar{\tau}),f_j(\tau,\bar{\tau}))$$ for $j=1,2 $ be two   maximal surfaces, then  
     $$ x_j-it_j=F_j(\tau)+\int{\bar{\tau}}^2\overline{F_j'(\tau)}d\bar{\tau},~~~\text{ }~~~
      x_j+it_j=\overline{F_j(\tau)}+\int{\tau}^2{F_j'(\tau)}d{\tau},$$
     $$ f_j=\int{\tau}{F_j'(\tau)}d{\tau}+\int\bar{\tau}(\overline{F_j(\tau)})'d{\bar{\tau}}.$$
where $F_j$ are functions which can be derived from the Weierstrass-Enneper data.     
 
 Then 
     $$ ix_j+t_j=iF_j(\tau)+i\int{\bar{\tau}}^2\overline{F_j'(\tau)}d\bar{\tau},
     ~~~\text{ }~~~
      ix_j-t_j=i\overline{F_j(\tau)}+i\int{\tau}^2{F_j'(\tau)}d{\tau},$$
     $$ f_j=\int{\tau}{F_j'(\tau)}d{\tau}+\int\bar{\tau}(\overline{F_j(\tau)})'d{\bar{\tau}}.$$

     Now we make an isothermal change of coordinates i.e. replacing $ \tau $ by $ i\zeta $ and $ \bar{\tau}$ by $-i\bar{\zeta} $. Then
    
 \begin{align}\label{addx1t1} ix_j+t_j=iF_j(i\zeta)-\int{\bar{\zeta}}^2d(i\overline{F_j(i\zeta)})=H_j(\zeta)-\int{\bar{\zeta}}^2G_j'(\bar{\zeta})d{\bar{\zeta}},
 \end{align}
 
 \begin{align}\label{minusx1t1}  
 ix_j-t_j=i\overline{F_j(i\zeta)}-\int{\zeta}^2d(i{F_j(i\zeta)})=G_j(\bar{\zeta})-\int{\zeta}^2H_j'({\zeta})d{{\zeta}},
\end{align}

\begin{align}\label{phi1}
 f_j=\int{\zeta}d(iF_j(i\zeta))+\int\bar{\zeta}d(-i\overline{F_j(i\zeta)})=\int{\zeta}{H_j'(\zeta)}d{\zeta}+\int\bar{\zeta}(-G_j'(\bar{\zeta}))d{\bar{\zeta}},
 \end{align}

where $ H_j(\zeta)= iF_j(i\zeta)$ and $ G_j(\bar{\zeta})= i\overline{F_j(i\zeta)}$ and they satisfy $ \overline{G_j(\bar{\zeta})}=-H_j(\zeta) .$ \\

To come back to solitons, define $$ X_{\theta}^s(\zeta,\bar{\zeta})= X_1^s(\zeta,\bar{\zeta})\cos{\theta}+ X_2^s(\zeta,\bar{\zeta})\sin{\theta}, $$
then $$ X_{\theta}^s=(ix_1,t_1,\varphi_1)\cos{\theta}+(ix_2,t_2,\varphi_2)\sin{\theta},$$  we let
$$ X_{\theta}^s=(i(x_1\cos{\theta}+x_2\sin{\theta}),(t_1\cos{\theta}+t_2\sin{\theta}),(\varphi_1\cos{\theta}+\varphi_2\sin{\theta}))=(x_{\theta}^s,t_{\theta}^s,
\varphi_{\theta}^s). $$\\
Now we prove the following proposition:
 \begin{proposition} Let $X_1=(x_1,t_1,f_1)  $ and $ X_2=(x_2,t_2,f_2) $ be two conjugate maximal surfaces and let $ X_{\theta}=(x_1\cos{\theta}+x_2\sin{\theta}, t_1\cos{\theta}+t_2\sin{\theta}, f_1\cos{\theta}+f_2\sin{\theta}))=(x_{\theta},t_{\theta},
f_{\theta})$ denotes the one parameter family of maximal surfaces corresponding to $ X_1 $ and $ X_2 $. Then  $ X_{\theta}^s=(i(x_1\cos{\theta}+x_2\sin{\theta}),(t_1\cos{\theta}+t_2\sin{\theta}),(\varphi_1\cos{\theta}+\varphi_2\sin{\theta}))=(x_{\theta}^s,t_{\theta}^s,
\varphi_{\theta}^s),$ where $ \varphi_j(x_j,t_j):=f_j(ix_j,t_j) $, $ j=1,2 $ will give us a one parameter family of complex solitons i.e. for each $ \theta $ we will have a complex solution to the Born-Infeld equation \eqref{borninfeldequn}. 
 \end{proposition}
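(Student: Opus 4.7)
The plan is to combine two ingredients already available in the paper: (i) the Wick rotation $f(x,t)\mapsto f(ix,t)$ sends solutions of the maximal surface equation \eqref{maximalsurfaceequn} to solutions of the Born-Infeld equation \eqref{borninfeldequn}; and (ii) the representation formulas \eqref{addx1t1}--\eqref{phi1}, which parametrize Born-Infeld solitons by pairs of holomorphic potentials $(H,G)$ subject to $\overline{G(\bar\zeta)}=-H(\zeta)$, and which depend linearly on this pair.

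First I would recall the fact, cited in the paragraph before the proposition, that the one-parameter family $X_\theta=X_1\cos\theta+X_2\sin\theta$ is a genuine family of maximal surfaces whenever $X_1,X_2$ are conjugate. In the associated Weierstrass-Enneper-type representation, the potentials transform linearly under this rotation, so $F_\theta:=F_1\cos\theta+F_2\sin\theta$ serves as the potential for $X_\theta$. Applying the change of isothermal coordinates $\tau\mapsto i\zeta$ that led to \eqref{addx1t1}--\eqref{phi1}, the potentials $H_\theta(\zeta):=iF_\theta(i\zeta)=H_1(\zeta)\cos\theta+H_2(\zeta)\sin\theta$ and $G_\theta(\bar\zeta):=i\overline{F_\theta(i\zeta)}=G_1(\bar\zeta)\cos\theta+G_2(\bar\zeta)\sin\theta$ are the potentials for $X_\theta^s$, and they still satisfy the compatibility relation $\overline{G_\theta(\bar\zeta)}=-H_\theta(\zeta)$ because each summand does and $\cos\theta,\sin\theta\in\mathbb{R}$.

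Next, I would substitute this $H_\theta,G_\theta$ into \eqref{addx1t1}--\eqref{phi1} and observe, by linearity of the integrals, that the resulting three coordinates agree termwise with $(x_\theta^s,t_\theta^s,\varphi_\theta^s)$ as defined in the statement. This identifies $X_\theta^s$ as the image, under the Wick rotation procedure of Section~3, of the maximal surface $X_\theta$. Since the triple \eqref{addx1t1}--\eqref{phi1} was itself derived from the Weierstrass-Enneper representation of a maximal surface followed by the Wick rotation $x\mapsto ix$, any triple produced by it automatically encodes a solution of the Born-Infeld equation; thus $\varphi_\theta^s$ solves \eqref{borninfeldequn} for every $\theta$, generally with complex values.

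The main obstacle I anticipate is not the linear algebra, which is mechanical, but making precise the sense in which $\varphi_\theta^s=\varphi_1\cos\theta+\varphi_2\sin\theta$ equals the Wick rotation of the graph function $f_\theta$. Since each $\varphi_j$ is defined by rotating $f_j$ in its own variable $x_j$, while $x_\theta=x_1\cos\theta+x_2\sin\theta$ is a further linear combination, one cannot argue at the level of graphs directly; inverting for $\zeta$ in terms of $(x_\theta,t_\theta)$ and then Wick-rotating would be awkward. The formulas \eqref{addx1t1}--\eqref{phi1} are designed to circumvent this: they package the Wick-rotated data at the isothermal parameter level, so linear combinations can be formed before any graph inversion is attempted, and the Born-Infeld equation is then satisfied parametrically by $X_\theta^s$.
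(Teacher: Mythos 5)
Your proposal is correct and follows essentially the same route as the paper: both form the linear combinations $H_{\theta}=H_1\cos\theta+H_2\sin\theta$ and $G_{\theta}=G_1\cos\theta+G_2\sin\theta$, note that the compatibility condition $\overline{G_{\theta}(\bar{\zeta})}=-H_{\theta}(\zeta)$ is preserved because $\cos\theta,\sin\theta$ are real, and substitute by linearity into the parametric formulas to obtain \eqref{xsminusts}, \eqref{xsplusts} and \eqref{phis}. The only cosmetic difference is the final justification: the paper closes by citing Whitham's parametric form of the general solution of the Born-Infeld equation, while you close by observing that the resulting triple is the Wick rotation of the genuinely maximal associated surface $X_{\theta}$ --- either works.
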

 \begin{proof}
 To show this, we show that $ X_{\theta}^s=(x_{\theta}^s,t_{\theta}^s,
\varphi_{\theta}^s)$ will give us the general solution of the Born-Infeld equation, as described in ~\cite{whitham}:
 Consider
\begin{align}
x_{\theta}^s-t_{\theta}^s=&(ix_1-t_1)\cos{\theta}+(ix_2-t_2)\sin{\theta} \nonumber \\
=&(G_1(\bar{\zeta})\cos{\theta}+G_2(\bar{\zeta})\sin{\theta})-\int({\zeta}^2
{H_1'(\zeta)}\cos{\theta}+ {\zeta}^2
{H_2'(\zeta)}\sin{\theta})d{\zeta}, \nonumber
\end{align}
where last line is obtained using \eqref{minusx1t1}.\\
If we define $ G_{\theta}^s(\bar{\zeta}):=G_1(\bar{\zeta})\cos{\theta}+G_2(\bar{\zeta})\sin{\theta} $ and $H_{\theta}^s(\zeta):= {H_1(\zeta)}\cos{\theta}+ {H_2(\zeta)}\sin{\theta} $, then $\overline{G_{\theta}^s(\bar{\zeta})}=-H_{\theta}^s(\zeta)$. Therefore
\begin{align}\label{xsminusts}
x_{\theta}^s-t_{\theta}^s
=G_{\theta}^s(\bar{\zeta})-\int{\zeta}^2{H_{\theta}^s}'({\zeta})d{\zeta},
\end{align}
in a similar manner, we can show
\begin{align}\label{xsplusts}
x_{\theta}^s+t_{\theta}^s=
H_{\theta}^s(\zeta)-\int{\bar{\zeta}}^2{G_{\theta}^s}'(\bar{\zeta})d{\bar{\zeta}}
\end{align}
and
\begin{align}\label{phis}
\varphi_{\theta}^s=\int{\zeta}{H_{\theta}^s}'(\zeta)d{\zeta}+\int\bar{\zeta}(-{G_{\theta}^s}'(\bar{\zeta}))d{\bar{\zeta}}
\end{align}
Now the expressions \eqref{xsminusts}, \eqref{xsplusts} and \eqref{phis} describes the general solution for Born-Infeld equation, see \cite{whitham}, where $ G_{\theta}^s(\bar{\zeta}) $ and $ H_{\theta}^s(\zeta) $ are such that they satisfy $\overline{G_{\theta}^s(\bar{\zeta})}=-H_{\theta}^s(\zeta)$.
\end{proof}

\section{Example}
Consider the Lorentzian helicoid
\begin{align*}
f_1(x_1,t_1)=\frac{\pi}{2}+\tan^{-1}\left(\frac{t_1}{x_1}\right).
\end{align*} 
 which is a maximal surface in the Lorentz-Minkowski space whose Gauss map is one-one. Then the W-E representation in terms of the coordinates $(\tau, \overline{\tau})$ is given by $X_1(\tau,\bar{\tau})=(x_1(\tau,\bar{\tau}),t_1(\tau,\bar{\tau}),f_1(\tau,\bar{\tau})) $ where (for details see \cite{rahul})
$$ x_1(\tau,\bar{\tau})=\frac{1}{2} Im \left(\tau-\frac{1}{\tau}\right),~~\text{ }~~t_1(\tau,\bar{\tau})= -\frac{1}{2} Re \left(\tau+\frac{1}{\tau}\right),$$
$$f_1(\tau,\bar{\tau})=-\frac{i}{2}\ln\left(\frac{\tau}{\bar{\tau}}\right).$$
and similarly for Lorentzian catenoid 
 \begin{align*}
f_2(x_2,t_2)=\sinh^{-1}(\sqrt{x_2^2+t_2^2}), 
\end{align*} 
we have (for details see\cite{rahul})
 , $ X_2(\tau,\bar{\tau})=(x_2(\tau,\bar{\tau}),t_2(\tau,\bar{\tau}),f_2(\tau,\bar{\tau}))$, where 
$$ x_2(\tau,\bar{\tau})=-\frac{1}{2} Re \left(\tau-\frac{1}{\tau}\right),~~\text{ }~~
t_2(\tau,\bar{\tau})= -\frac{1}{2} Im\left(\tau+\frac{1}{\tau}\right),$$
$$f_2(\tau,\bar{\tau})=-\frac{1}{2}\ln(\tau\bar{\tau}).$$

Then
$$ x_1+ix_2=\frac{-i}{2}\left(\tau-\frac{1}{\tau}\right),~~\text{ }~~t_1+it_2=\frac{-1}{2}\left(\tau+\frac{1}{\tau}\right),~~\text{ }~~f_1+if_2=-i\ln{\tau}.$$
Thus we see that $X_1+iX_2:=(\frac{-i}{2}\left(\tau-\frac{1}{\tau}\right),\frac{-1}{2}\left(\tau+\frac{1}{\tau}\right),-i\ln{\tau})$ is a holomorphic mapping on a common domain of $\mathbb{C}-\{0\}  $. Therefore, the Lorentzian helicoid and Lorentzian catenoid are conjugate maximal surfaces. Now $$ X_{\theta}(\tau,\bar{\tau}):= X_1(\tau,\bar{\tau})\cos{\theta}+ X_2(\tau,\bar{\tau})\sin{\theta} $$ gives a one parameter family of maximal surfaces. We have  
$$ix_1-t_1=\frac{1}{2}\left(\frac{1}{\bar{\tau}}+\tau \right),~~\text{and }~~ 
ix_1+t_1=-\frac{1}{2}\left(\frac{1}{\tau}+\bar{\tau} \right).$$
and
$$ix_2-t_2=\frac{i}{2}\left(\frac{1}{\bar{\tau}}-\tau \right),~~\text{and }~~  
ix_2+t_2=\frac{i}{2}\left(\frac{1}{\tau}-\bar{\tau} \right).$$

If we replace $ \tau $ by $ i\zeta $ and $ \bar{\tau} $ by $ -i\bar{\zeta} $ we get
\begin{align}
ix_1-t_1=\frac{i}{2}\left(\frac{1}{\bar{\zeta}}+\zeta \right)~~~\text{;}~~~ 
ix_1+t_1=\frac{i}{2}\left(\frac{1}{\zeta}+\bar{\zeta} \right)
\end{align}
and
\begin{align}
f_1(\zeta,\bar{\zeta})=-\frac{i}{2}\ln\left(\frac{\zeta}{\bar{\zeta}}\right).
\end{align}
\begin{align}
ix_2-t_2=-\frac{1}{2}\left(\frac{1}{\bar{\zeta}}-\zeta \right)~~~\text{;}~~~ 
ix_2+t_2=\frac{1}{2}\left(\frac{1}{\zeta}-\bar{\zeta} \right)
   \end{align}
\begin{align}
f_2(\zeta,\bar{\zeta})=-\frac{1}{2}\ln(\zeta\bar{\zeta}).
\end{align}

Now we are going to compute the functions $ G_{\theta}^s(\bar{\zeta}) $ and $ H_{\theta}^s(\zeta) $ which will give our required one parameter family of complex solitons corresponding to the one parameter family of maximal surfaces mentioned above. We first compute
\begin{align}\label{xthetaminus}
x_{\theta}^s-t_{\theta}^s=&(ix_1-t_1)\cos{\theta}+(ix_2-t_2)\sin{\theta} \nonumber \\
=&\frac{i}{2\bar{\zeta}}e^{i\theta}+\frac{i\zeta}{2}e^{-i\theta},
\end{align}
next we compute 
\begin{align}\label{xthetaplus}
x_{\theta}^s+t_{\theta}^s=&(ix_1+t_1)\cos{\theta}+(ix_2+t_2)\sin{\theta} \nonumber \\
=&\frac{i}{2\zeta}e^{-i\theta}+\frac{i\bar{\zeta}}{2}e^{i\theta}.
\end{align}
Here we get $ G_{\theta}^s(\bar{\zeta})=\dfrac{i}{2\bar{\zeta}}e^{i\theta} $ and $ H_{\theta}^s(\zeta)=\dfrac{i}{2\zeta}e^{-i\theta} $ they also satisfy
$\overline{G_{\theta}^s(\bar{\zeta})}=-H_{\theta}^s(\zeta)$.
Hence
\begin{align}\label{phitheta}
\varphi_{\theta}^s=-\frac{i}{2}\ln(\zeta)e^{-i\theta}+\frac{i}{2}\ln(\bar{\zeta})e^{i\theta}.
\end{align}
Equations \eqref{xthetaminus}, \eqref{xthetaplus} and \eqref{phitheta} describes the general solution of Born-Infeld equation \eqref{borninfeldequn}. Therefore, $X_{\theta}^s:=(x_{\theta}^s,t_{\theta}^s,
\varphi_{\theta}^s)$ gives a one parameter family of Born-Infeld solitons.
 
\section{Some Identities}

Let $ X $ and $ A $ be complex, where A is not an odd multiple of $ \frac{\pi}{2} $. Then
\begin{equation}\label{ramanujancos}
\dfrac{\cos(X+A)}{\cos(A)}=\prod_{k=1}^\infty\left\lbrace\left(1-\dfrac{X}{(k-\frac{1}{2})\pi-A}\right)\left(1+\dfrac{X}{(k-\frac{1}{2})\pi+A}\right)\right\rbrace 
\end{equation}
If $ X $ and $ A $ are real, then
\begin{equation}\label{ramanujantan}
\tan^{-1}(\tanh X\cot A)=\tan^{-1}\left(\frac{X}{A}\right) + \sum_{k=1}^{\infty}\left(\tan^{-1}\left(\frac{X}{k\pi+A}\right)-\tan^{-1}\left(\frac{X}{k\pi-A}\right)\right).
\end{equation}
The above identities were obtained by Srinivasa Ramanujan \cite{ramanujan}.
We are going to use this identity to arrive at further nontrivial identities, using Weierstrass-Enneper representation for maximal surfaces.

The Weierestrass-Enneper representation for a maximal surface $ (x,y,z), $ in Lorentz-Minkowski space $\mathbb{L}^3:=(\mathbb{R}^3, dx^2+dy^2-dz^2),$ whose Gauss map is one-one is given by \cite{kobasingu},
\begin{center}
$x(\zeta)=Re(\int^\zeta M(\omega)(1+\omega^{2})d\omega)\text{ ; }y(\zeta)=Re(\int^\zeta iM(\omega)(1-\omega^{2})d\omega)$
\end{center}
\begin{center}
 $z(\zeta)=Re(\int^\zeta- 2M(\omega)\omega d\omega),$ where $ \zeta=u+iv. $
\end{center}
\subsection{Identity corresponding to Scherk's surface of first kind}
\begin{proposition}
For $ \zeta \in \Omega \subset  {\mathbb C} -\{\pm 1, \pm i \}$, we have the following identity
\begin{equation}\label{1stidentity}
\ln\vert \frac{{\zeta}^2-1}{{\zeta}^2+1}\vert=\sum_{k=1}^{\infty}\ln \left(\dfrac{(k-\frac{1}{2})\pi-i\ln\vert \frac{\zeta-i}{\zeta+i}\vert}{(k-\frac{1}{2})\pi-i\ln\vert \frac{\zeta+1}{\zeta-1}\vert}\right)+\sum_{k=1}^{\infty}\ln \left(\dfrac{(k-\frac{1}{2})\pi+i\ln\vert \frac{\zeta-i}{\zeta+i}\vert}{(k-\frac{1}{2})\pi+i\ln\vert \frac{\zeta+1}{\zeta-1}\vert}\right).
\end{equation}
\end{proposition}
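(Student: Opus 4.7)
The plan is to specialize Ramanujan's cosine identity \eqref{ramanujancos} to specific complex values of $X$ and $A$ arising from the Weierstrass--Enneper representation of Scherk's maximal surface of the first kind, and then take logarithms of the resulting product formula.

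First I would identify the relevant Weierstrass--Enneper data. Taking $M(\omega)=\frac{1}{1-\omega^{4}}$ and substituting into the formulas displayed just before the proposition, short computations using partial fractions (namely $\int d\omega/(1-\omega^{2})$, $\int d\omega/(1+\omega^{2})$ and $\int -2\omega\,d\omega/(1-\omega^{4})$) yield the coordinates
$$x(\zeta)=\tfrac{1}{2}\ln\left|\tfrac{\zeta+1}{\zeta-1}\right|,\qquad y(\zeta)=\tfrac{1}{2}\ln\left|\tfrac{\zeta-i}{\zeta+i}\right|,\qquad z(\zeta)=\tfrac{1}{2}\ln\left|\tfrac{\zeta^{2}-1}{\zeta^{2}+1}\right|,$$
which blow up precisely at $\zeta\in\{\pm 1,\pm i\}$, accounting for the stated domain restriction on $\Omega$.

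Next, in \eqref{ramanujancos} I would set
$$A=i\ln\left|\tfrac{\zeta+1}{\zeta-1}\right|=2ix,\qquad X+A=i\ln\left|\tfrac{\zeta-i}{\zeta+i}\right|=2iy.$$
Since $A$ and $X+A$ are purely imaginary, $\cos A=\cosh(2x)$ and $\cos(X+A)=\cosh(2y)$. Applying $\cosh(\ln r)=(r+r^{-1})/2$ together with the parallelogram identity $|\zeta+a|^{2}+|\zeta-a|^{2}=2(|\zeta|^{2}+|a|^{2})$ for $a=1$ and $a=i$ gives
$$\cos A=\frac{|\zeta|^{2}+1}{|\zeta^{2}-1|},\qquad \cos(X+A)=\frac{|\zeta|^{2}+1}{|\zeta^{2}+1|},$$
so the common factor $|\zeta|^{2}+1$ cancels in the ratio, leaving $\dfrac{\cos(X+A)}{\cos A}=\left|\dfrac{\zeta^{2}-1}{\zeta^{2}+1}\right|$.

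Finally I would take the natural logarithm of \eqref{ramanujancos}, converting the infinite product into a sum of logarithms. Rewriting $(k-\tfrac{1}{2})\pi-A-X=(k-\tfrac{1}{2})\pi-(X+A)$ and substituting the chosen values of $A$ and $X+A$, the two infinite sums collapse to precisely the right-hand side of \eqref{1stidentity}, while the left-hand side becomes $\ln\left|(\zeta^{2}-1)/(\zeta^{2}+1)\right|$. Convergence of the sums is inherited directly from \eqref{ramanujancos}. The conceptual crux of the argument is the fortunate coincidence that the parallelogram identity yields the same value $2(|\zeta|^{2}+1)$ for both $a=1$ and $a=i$, causing the cancellation that turns $\cos(X+A)/\cos A$ into the clean modulus expression matching the stated left-hand side; the remainder of the argument is then a routine bookkeeping step.
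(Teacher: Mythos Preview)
Your proposal is correct and follows essentially the same route as the paper: specialize Ramanujan's identity \eqref{ramanujancos} at the purely imaginary values $A=i\ln\bigl|\tfrac{\zeta+1}{\zeta-1}\bigr|$ and $X+A=i\ln\bigl|\tfrac{\zeta-i}{\zeta+i}\bigr|$ coming from the Weierstrass--Enneper parametrization of Scherk's first surface, then take logarithms. The only substantive difference is packaging: the paper invokes the known non-parametric form $z=\ln(\cosh y)-\ln(\cosh x)$ of the Scherk surface and says ``one can easily verify'' it from the parametric expressions, whereas you supply that verification explicitly via the parallelogram identity, which is a nice touch. Your choice $M(\omega)=1/(1-\omega^{4})$ differs from the paper's $M(\omega)=2/(1-\omega^{4})$ by a harmless factor of two that you absorb by writing $A=2ix$, $X+A=2iy$; the substitution into \eqref{ramanujancos} is literally the same in both arguments.
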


\begin{proof}
For Scherk's surface of first kind \cite{kobamax}, which in non-parametric form is, defined by,
\begin{equation}
z=\ln (\cosh y)-\ln (\cosh x)~~~\text{,}~~~ (\cosh^{-2}x+\cosh^{-2}y>1)
\end{equation}

 If we take the Weierstrass data, $ M(\omega)=\dfrac{2}{1-{\omega}^4}. $  Then using the Weierstrass-Enneper representation, we can write Scherk's surface in parametric form as
 \begin{align}\label{xzetascherkfirst}
x(\zeta)=\ln\vert \frac{\zeta+1}{\zeta-1}\vert,
\end{align}
\begin{align}\label{yzetascherkfirst}
 y(\zeta)=\ln\vert \frac{\zeta-i}{\zeta+i}\vert,
\end{align}
\begin{align}\label{zzetascherkfirst}
 z(\zeta)=\ln\vert \frac{{\zeta}^2-1}{{\zeta}^2+1}\vert.
\end{align} 
This parametrization is well defined on $ \Omega \subset  \mathbb{C}-\{\pm i, \pm i\}. $
We easily compute that
\begin{center}
 $x(\zeta)=\frac{1}{2}\ln\left(\frac{(u+1)^2+v^2}{(u-1)^2+v^2}\right)$ $~~\text{;}~~$ $ y(\zeta)=\frac{1}{2}\ln\left(\frac{u^2+(v-1)^2}{u^2+(v+1)^2}\right) $
\end{center}
\begin{center}
$ z(\zeta)=\frac{1}{2}\ln\left(\frac{(u^2-v^2-1)^2+4u^2v^2}{(u^2-v^2+1)^2+4u^2v^2}\right). $
\end{center}

One can easily verify from the expressions for $ x,y, $ and $ z $ that 
     \begin{center}
     $z=\ln(\cosh y)-\ln(\cosh x).$
     \end{center}
Now if we take the logarithm on both sides of the identity \eqref{ramanujancos}, we get
\begin{equation}\label{ramanujanlogcos}
\ln\left(\dfrac{\cos(X+A)}{\cos(A)}\right)=\sum_{k=1}^{\infty}\ln\left(\dfrac{(k-\frac{1}{2})\pi-(X+A)}{(k-\frac{1}{2})\pi-A}\right)+\sum_{k=1}^{\infty}\ln\left(\dfrac{(k-\frac{1}{2})\pi+(X+A)}{(k-\frac{1}{2})\pi+A}\right).
\end{equation}
If we put $ X+A=iy $ and $ A=ix $ in \eqref{ramanujanlogcos}, where $ ix $ is not an odd multiple of $-\frac{i\pi}{2}, $ we obtain
\begin{equation}\label{ramanujanzlogcos}
z=\ln \left(\dfrac{\cosh y}{\cosh x}\right)=\ln \left(\dfrac{\cos iy}{\cos i x}\right)=\sum_{k=1}^{\infty}\ln \left(\dfrac{(k-\frac{1}{2})\pi-iy}{(k-\frac{1}{2})\pi-ix}\right)+\sum_{k=1}^{\infty}\ln \left(\dfrac{(k-\frac{1}{2})\pi+iy}{(k-\frac{1}{2})\pi+ix}\right).
\end{equation}
Now we use \eqref{xzetascherkfirst}, \eqref{yzetascherkfirst}, and \eqref{zzetascherkfirst} in \eqref{ramanujanzlogcos}, we will get our first identity \eqref{1stidentity}.
\end{proof}

\subsection{Identity corresponding to helicoid of second kind.}
\begin{proposition}
For $ \zeta \in \Omega  \subset {\mathbb C} - \{0 \},$ we have the following identity
\begin{equation}\label{2ndidentity}
\frac{Im\left(\zeta+\frac{1}{\zeta}\right)}{Im\left(\zeta-\frac{1}{\zeta}\right)}=\dfrac{1}{i}\prod_{k=1}^{\infty}\left\lbrace\left(\dfrac{(k-1)\pi+i\ln|\zeta|}{(k-\frac{1}{2})\pi+i\ln|\zeta|}\right)\left(\dfrac{k\pi-i\ln|\zeta|}{(k-\frac{1}{2})\pi-i\ln|\zeta|}\right)\right\rbrace.
\end{equation}
\end{proposition}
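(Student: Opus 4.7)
The plan is to parallel the proof of the Scherk identity: derive a Ramanujan-style product expansion of $\tan$ from \eqref{ramanujancos}, and then match it against the coordinate functions of an isothermal parametrization of the helicoid of the second kind. First I would evaluate the left-hand side of \eqref{2ndidentity} directly: writing $\zeta=re^{i\theta}$, one has $Im(\zeta-1/\zeta)=(r+1/r)\sin\theta$ and $Im(\zeta+1/\zeta)=(r-1/r)\sin\theta$, so the quotient collapses to $(r^2-1)/(r^2+1)=\tanh(\ln|\zeta|)$. Equivalently, the map $\zeta\mapsto\bigl(\tfrac{1}{2}Im(\zeta-1/\zeta),\,\ln|\zeta|,\,\tfrac{1}{2}Im(\zeta+1/\zeta)\bigr)$ is isothermal and realizes the non-parametric equation $z=x\tanh y$ of the helicoid of the second kind, so the LHS of \eqref{2ndidentity} is simply the ratio $z/x=\tanh y$ read off through this parametrization.

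Next, I would extract a product formula for tangent from \eqref{ramanujancos} by specializing $X=-\pi/2$ and $A=\alpha$: the left-hand side becomes $\cos(\alpha-\pi/2)/\cos(\alpha)=\tan(\alpha)$, and after simplifying each factor the identity reads
$$\tan(\alpha)=\prod_{k=1}^{\infty}\frac{k\pi-\alpha}{(k-\tfrac{1}{2})\pi-\alpha}\cdot\frac{(k-1)\pi+\alpha}{(k-\tfrac{1}{2})\pi+\alpha}.$$
A Wick rotation $\alpha=i\ln|\zeta|$ combined with $\tan(iX)=i\tanh X$ then yields
$$\tanh(\ln|\zeta|)=\frac{1}{i}\prod_{k=1}^{\infty}\frac{k\pi-i\ln|\zeta|}{(k-\tfrac{1}{2})\pi-i\ln|\zeta|}\cdot\frac{(k-1)\pi+i\ln|\zeta|}{(k-\tfrac{1}{2})\pi+i\ln|\zeta|},$$
which, matched with the left-hand side computation above, is exactly \eqref{2ndidentity}.

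The proof has no serious analytic obstacle: convergence of the product is inherited from Ramanujan's identity, and the substitutions are elementary. The step that most requires attention, if one wants to keep the exposition structurally parallel to the Scherk case, is to exhibit a concrete Weierstrass--Enneper datum $M(\omega)$ producing the isothermal coordinates $\bigl(\tfrac{1}{2}Im(\zeta-1/\zeta),\,\ln|\zeta|,\,\tfrac{1}{2}Im(\zeta+1/\zeta)\bigr)$ for $z=x\tanh y$ literally. The natural attempts $M(\omega)=-i/(2\omega^2)$ and $M(\omega)=-1/(2\omega^2)$ instead recover the first-kind helicoid and the Lorentzian catenoid, so either a different $M$ together with a conformal change of parameter must be identified, or the opening step must be presented, as above, simply as an elementary polar-coordinate calculation.
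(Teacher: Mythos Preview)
Your argument is correct and follows essentially the same route as the paper: identify the left-hand side as $\tanh$ of the $y$-coordinate in an isothermal parametrization of the helicoid of the second kind, and then specialise Ramanujan's product \eqref{ramanujancos} with a shift of $\pi/2$ and $A$ purely imaginary. The only cosmetic differences are signs: the paper takes $X=\pi/2$, $A=iy$ with $y=-\ln|\zeta|$, which is equivalent to your choice $X=-\pi/2$, $\alpha=i\ln|\zeta|$.

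On your closing puzzle about the Weierstrass--Enneper datum: the reason your attempts $M(\omega)=-i/(2\omega^2)$ and $M(\omega)=-1/(2\omega^2)$ landed on the wrong surfaces is that the standard representation in the paper assigns the linear integrand $-2M\omega$ to the \emph{timelike} coordinate $z$. For a surface of the form $z=-x\tanh y$ one wants the linear integrand to produce the \emph{spacelike} coordinate $y$ instead, so the paper invokes the variant representation from \cite{kobamax},
\[
x=Re\!\int M(1+\omega^2)\,d\omega,\qquad y=Re\!\int 2iM\omega\,d\omega,\qquad z=Re\!\int M(\omega^2-1)\,d\omega,
\]
and then $M(\omega)=i/(2\omega^2)$ gives exactly $x=-\tfrac12 Im(\zeta-1/\zeta)$, $y=-\ln|\zeta|$, $z=-\tfrac12 Im(\zeta+1/\zeta)$. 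Your direct polar-coordinate computation is a perfectly valid substitute, but this is how the paper keeps the exposition parallel to the Scherk case.
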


\begin{proof}
The helicoid of second kind is a ruled surface, which in non-parametric form, is given by \cite{kobamax}
\begin{equation}\label{helicoid2}
 z=-x\tanh y,~~\text{ }~~ (x^2\leq \cosh^2 y).
 \end{equation}

Here, we use a variant of Weierstrass-Enneper representation given by \cite{kobamax}
\begin{center}
$x(\zeta)=Re(\int^\zeta M(\omega)(1+\omega^{2})d\omega)$ $~~\text{;}~~$ $ y(\zeta)=Re(\int^\zeta 2iM(\omega)\omega d\omega)$
\end{center}
\begin{center}
$ z(\zeta)=Re(\int^\zeta M(\omega)(\omega^{2}-1) d\omega)$.
\end{center} 

and we take the Weierstrass data as, $ M(\omega)=\dfrac{i}{2\omega^2} $. Then we get a parametric representation of \eqref{helicoid2}, valid in a domain $\Omega \subset {\mathbb C} - \{ 0 \}$, as follows,
\begin{align}\label{xyzheli2}
x(\zeta)=-\frac{1}{2}Im\left(\zeta-\frac{1}{\zeta}\right),~~\text{ }~~y(\zeta)=-\ln|\zeta|,~~\text{ }z(\zeta)=-\frac{1}{2}Im\left(\zeta+\frac{1}{\zeta}\right).
\end{align}
Now we write equation \eqref{helicoid2} as $ -\dfrac{z}{ix}=\dfrac{\cos(iy+\frac{\pi}{2})}{\cos(iy)},$ next replace $ X $ by $ \frac{\pi}{2} $ and $ A $ by $ iy $, in Ramanujan identity \eqref{ramanujancos}, and then use equations \eqref{xyzheli2} to get the desired identity \eqref{2ndidentity}.
\end{proof}

\subsection{Identity corresponding to Lorentzian helicoid}
\begin{proposition}
For $ \zeta=u+iv $, such that $ \zeta \in \Omega \subset {\mathbb C} - \{0\} $, we have the following identity
\begin{equation}\label{3rdidentity}
Im(\ln(\zeta))-\tan^{-1}\left(\tanh\left(-\frac{1}{2}Re\left(\zeta+\frac{1}{\zeta}\right)\right)\cot\left(\frac{1}{2}Im\left(\zeta-\frac{1}{\zeta}\right)\right)\right) 
\end{equation}
$$=\pm\frac{\pi}{2}+
\sum_{k=1}^{\infty}\left(\tan^{-1}\left(\dfrac{Re(\zeta+\frac{1}{\zeta})}{Im(\zeta-\frac{1}{\zeta})-2k\pi}\right)+\tan^{-1}\left(\dfrac{Re(\zeta+\frac{1}{\zeta})}{Im(\zeta-\frac{1}{\zeta})+2k\pi}\right)\right),$$
where the constant  term is  $\frac{\pi}{2},$  when either $ u> 0~~\text{and}~~ v>0 $ \textit{or} $ u<0~~\text{and}~~v<0 $ \textit{or} $ u=0 $ \textit{or} $ v=0$ and the constant term is  $-\frac{\pi}{2}$ otherwise.
\end{proposition}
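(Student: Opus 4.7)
The plan is to derive \eqref{3rdidentity} by substituting the Weierstrass--Enneper parametrization of the Lorentzian helicoid recorded in Section 5 into Ramanujan's arctangent identity \eqref{ramanujantan}, and then translating $\tan^{-1}(t_1/x_1)$ back to $Im(\ln\zeta)$ via the non-parametric form of the helicoid.

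From the Example in Section 5 the Lorentzian helicoid admits the parametrization
$$x_1=\tfrac{1}{2}Im\!\left(\zeta-\tfrac{1}{\zeta}\right),\quad t_1=-\tfrac{1}{2}Re\!\left(\zeta+\tfrac{1}{\zeta}\right),\quad f_1=-\tfrac{i}{2}\ln\!\left(\tfrac{\zeta}{\bar{\zeta}}\right)=Im(\ln\zeta),$$
and it satisfies the non-parametric relation $f_1=\tfrac{\pi}{2}+\tan^{-1}(t_1/x_1)$ on the appropriate branch. Note that $t_1/x_1=-Re(\zeta+1/\zeta)/Im(\zeta-1/\zeta)$.

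First I would substitute $X=t_1$ and $A=x_1$ in \eqref{ramanujantan}. A direct computation yields
$$\frac{X}{k\pi-A}=\frac{Re(\zeta+1/\zeta)}{Im(\zeta-1/\zeta)-2k\pi},\qquad -\frac{X}{k\pi+A}=\frac{Re(\zeta+1/\zeta)}{Im(\zeta-1/\zeta)+2k\pi},$$
so that after applying $\tan^{-1}(-y)=-\tan^{-1}(y)$ the Ramanujan sum matches the series on the right of \eqref{3rdidentity} term by term. Rewriting \eqref{ramanujantan} in the form
$$\tan^{-1}\!\left(\tfrac{t_1}{x_1}\right)-\tan^{-1}(\tanh t_1\cdot\cot x_1)=\sum_{k=1}^{\infty}\!\left[\tan^{-1}\!\left(\tfrac{X}{k\pi-A}\right)+\tan^{-1}\!\left(-\tfrac{X}{k\pi+A}\right)\right]$$
makes this matching explicit.

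Finally, replacing $\tan^{-1}(t_1/x_1)$ on the left by $Im(\ln\zeta)\mp\tfrac{\pi}{2}$, as dictated by the non-parametric relation, produces \eqref{3rdidentity} with the constant $\pm\tfrac{\pi}{2}$ on the right. The main obstacle is branch bookkeeping. Writing $\zeta=u+iv$, one has $2x_1=v(1+(u^2+v^2)^{-1})$ and $-2t_1=u(1+(u^2+v^2)^{-1})$, so $\mathrm{sign}(x_1)=\mathrm{sign}(v)$ and $\mathrm{sign}(t_1)=-\mathrm{sign}(u)$, while $Im(\ln\zeta)=\arg\zeta$ lies in the principal range $(-\pi,\pi]$. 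Reconciling the principal arctangent value of $t_1/x_1=-u/v$ with $\arg\zeta$ on each open quadrant of the $(u,v)$-plane, together with the axis cases, yields the stated case distinction for the sign of $\pi/2$. This is routine but careful case analysis and is the only delicate part of the argument.
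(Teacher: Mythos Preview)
Your proposal is correct and follows essentially the same route as the paper: substitute the Weierstrass--Enneper parametrization of the Lorentzian helicoid into Ramanujan's identity \eqref{ramanujantan} with $X=t_1$, $A=x_1$, and then use the non-parametric relation $z=\pm\tfrac{\pi}{2}+\tan^{-1}(y/x)$ to convert $\tan^{-1}(t_1/x_1)$ into $Im(\ln\zeta)\mp\tfrac{\pi}{2}$. Your write-up is in fact more explicit than the paper's, which omits the term-by-term matching of the series and the sign computations $\mathrm{sign}(x_1)=\mathrm{sign}(v)$, $\mathrm{sign}(t_1)=-\mathrm{sign}(u)$ that you supply.
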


\begin{proof}
Consider a Lorentzian helicoid, $ z=\pm\frac{\pi}{2}+\tan^{-1}(\frac{y}{x}) $, we have Weierstrass-Enneper representation for this valid in a domain $\Omega \subset {\mathbb C} - \{0\}$, given by \cite{rahul}
\begin{align}\label{xzetatan}
x(\zeta)=\frac{1}{2}Im(\zeta-\frac{1}{\zeta}),~~\text{ }~~y(\zeta)=-\frac{1}{2}Re(\zeta+\frac{1}{\zeta}),~~\text{ } z(\zeta)=Im(\ln(\zeta))
\end{align}
In the parameter $ \zeta=u+iv, $ we have $ z=\tan^{-1}(\frac{v}{u}) $ and $ \tan^{-1}(\frac{y}{x})=-\tan^{-1}(\frac{u}{v}). $ Now we see that $$z=\frac{\pi}{2}+\tan^{-1}(\frac{y}{x}), $$ only when either $ u> 0~~\text{and}~~ v>0 $ \textit{or} $ u<0~~\text{and}~~v<0 $ \textit{or} $ u=0 $ \textit{or} $ v=0. $ For other values of $ u, v$ i.e. when either $ u<0 $ and $ v>0 $ \textit{or} $ u>0 $ and $ v<0 $, we get
$$ z=-\frac{\pi}{2}+\tan^{-1}(\frac{y}{x}).$$
Next we use equations \eqref{xzetatan}  in Ramanujan identity \eqref{ramanujantan} to obtain the identity \eqref{3rdidentity}.
\end{proof}

\end{document}